\newtheorem{prop}{Proposition}[section]
\newtheorem{thm}[prop]{Theorem}
\newtheorem{lem}[prop]{Lemma}
\newtheorem{cor}[prop]{Corollary}
\theoremstyle{definition}
\newtheorem{rem}[prop]{Remark}
\newtheorem*{ack}{Acknowledgements}
\def\co{\colon\thinspace}
\newcommand{\A}{\mathcal{A}}
\newcommand{\C}{\mathbb{C}}
\newcommand{\CP}{\mathbb{C}\mathrm{P}}
\newcommand{\Jst}{J_{\mathrm{st}}}
\newcommand{\OO}{\mathrm{O}}
\newcommand{\R}{\mathbb{R}}
\newcommand{\SO}{\mathrm{SO}}
\newcommand{\BSO}{\mathrm{BSO}}
\newcommand{\U}{\mathrm{U}}
\newcommand{\BU}{\mathrm{BU}}
\newcommand{\xist}{\xi_{\mathrm{st}}}
\newcommand{\Z}{\mathbb{Z}}
\DeclareMathOperator{\Int}{Int}
\DeclareMathOperator{\rank}{rank}
\begin{document}

\title{On subcritically Stein fillable $5$-manifolds}

\author[F.~Ding]{Fan Ding}
\address{School of Mathematical Sciences and LMAM, Peking University,
Beijing 100871, P.~R.~China}
\email{dingfan@math.pku.edu.cn}

\author[H.~Geiges]{Hansj\"{o}rg Geiges}
\address{Mathematisches Institut, Universit\"at zu K\"oln,
Weyertal 86--90, 50931 K\"oln, Germany}
\email{geiges@math.uni-koeln.de}

\author[G.~Zhang]{Guangjian Zhang}
\address{School of Mathematical Sciences, Peking University,
Beijing 100871, P.~R.~China}
\email{zhangguangjian8888@163.com}

\date{}

\begin{abstract}
We make some elementary observations concerning subcritically Stein
fillable contact structures on $5$-manifolds.
Specifically, we determine the diffeomorphism type of such
contact manifolds in the case the fundamental group is finite cyclic,
and we show that on the $5$-sphere the standard contact structure
is the unique subcritically fillable one. More generally,
it is shown that subcritically fillable contact structures
on simply connected $5$-manifolds are determined by their
underlying almost contact structure. Along the way, we discuss the
homotopy classification of almost contact structures.
\end{abstract}

\subjclass[2010]{53D35; 32Q28, 57M20, 57Q10, 57R17}


\maketitle


\section{Introduction}
A \emph{Stein domain} in the sense of \cite[Definition 11.14]{ciel12}
is a compact manifold $W$ with boundary admitting a complex structure
$J$ and a $J$-convex Morse function for which the boundary
$\partial W=:M$ is a regular level set. We shall write a Stein domain
as a pair $(W,J)$ although, strictly speaking, the $J$-convex Morse function
is part of the data. The complex tangencies $TM\cap J(TM)$ define
a contact structure.

A closed contact manifold $(M,\xi)$ is said to be \emph{Stein fillable}
if it arises in this way as the boundary of a Stein domain.
It is well known that a Stein domain
of dimension $2n$ has a handle decomposition, adapted to
the Stein structure, with handles of index
at most equal to~$n$. A Stein filling is called \emph{subcritical},
if there are no handles of index~$n$.

In this note we are concerned with topological and contact geometric aspects
of subcritically Stein fillable contact $5$-manifolds.
The first result we want to discuss gives a uniqueness
statement for the diffeomorphism type of such contact manifolds
when it has a finite cyclic fundamental group.
This extends a corresponding result for simply connected contact manifolds
due to Bowden--Crowley--Stipsicz~\cite{bcs14}.
The main issue is one of simple homotopy theory, which in our examples can
be addressed with results of Hambleton--Kreck~\cite{hakr93} on $2$-complexes
and, as in~\cite{bcs14}, the Mazur--Wall theory of thickenings.

In order to state the result, we need to introduce certain
model manifolds. Let $m\geq 2$ be an integer. Write
$L_m$ for the $3$-dimensional lens space $L(m,1)$ with an open
$3$-disc removed. This space $L_m$ can be obtained from
a solid torus $S^1\times D^2$ by attaching a $2$-handle along
an $(m,-1)$-torus not in $\partial (S^1\times D^2)$.

Oriented $D^3$-bundles over $L_m$ are classified by the
second Stiefel--Whitney class $w_2$ (this standard fact will be
elucidated in the proof of Theorem~\ref{thm:topology}).
Since $H^2(L_m;\Z_2)$ is trivial for $m$ odd, and isomorphic to
$\Z_2$ for $m$ even, the only $D^3$-bundles are the product
$L_m\times D^3$ and, for $m=2n$ even, the non-trivial bundle
$L_{2n}\tilde\times D^3$. After rounding of corners, we may think
of the total spaces of these bundles as manifolds with boundary.

Similarly, over $S^2$ we have the trivial $S^3$-bundle $S^2\times S^3$,
and the non-trivial one $S^2\tilde\times S^3$. In
\cite[Proposition~7.4]{bcs14} it was shown that if $(M,\xi)$
is a closed, \emph{simply connected} $5$-dimensional contact
manifold admitting a subcritical Stein filling, then $M$ is
diffeomorphic to $\#_r S^2\times S^3$
if $M$ is spin, and $S^2\tilde\times S^3\#_{r-1}S^2\times S^3$
if $M$ is not spin, where $r=\rank H_2(M;\Z)$.

Our first result extends this to finite cyclic fundamental groups.

\begin{thm}
\label{thm:topology}
Suppose that $(M,\xi)$ is a closed, connected contact $5$-manifold admitting
a subcritical Stein filling, with $\pi_1(M)\cong \Z_m$ for some
integer $m\geq 2$. Set $r=\rank H_2(M;\Z)$.

\begin{itemize}
\item[(i)] If $m=2n+1$ is odd, then $M$ is diffeomorphic to
\[ \partial(L_{2n+1}\times D^3)\#_r(S^2\times S^3)
\;\;\text{or}\;\;
\partial(L_{2n+1}\times D^3)\# (S^2\tilde\times S^3)\#_{r-1}(S^2\times S^3),\]
depending on whether $M$ is spin or not.

\item[(ii)] If $m=2n$ is even, then $M$ is diffeomorphic to
\[ \partial(L_{2n}\times D^3)\#_r(S^2\times S^3)\]
if $M$ is spin, and to
\[ \partial(L_{2n}\tilde\times D^3)\#_r(S^2\times S^3)
\;\;\text{or}\;\;
\partial(L_{2n}\times D^3)\#(S^2\tilde\times S^3)\#_{r-1}(S^2\times S^3)\]
when $M$ is not spin.
\end{itemize}
The diffeomorphism type of the subcritical Stein filling is
determined by~$M$.

On any of these manifolds, each homotopy class of almost contact structures
contains a unique subcritically Stein fillable contact structure,
up to isotopy,
with a unique Stein filling, up to Stein homotopy.
\end{thm}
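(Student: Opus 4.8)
The plan is to deduce the statement from the flexibility of subcritical Stein (equivalently, flexible Weinstein) structures, together with the smooth classification already obtained. Fix one of the model manifolds $M$ from (i) or (ii) and choose a subcritical handlebody $W$ with $\partial W=M$: for instance $L_m\times D^3$, or the twisted bundle $L_{2n}\tilde\times D^3$, boundary-connect-summed with $r$ (respectively $r-1$ plus one twisted) copies of $S^2\times D^4$; in every case $W$ is built from handles of index at most~$2$. The first point I would establish is a homological lemma: restriction to the boundary induces a bijection between homotopy classes of almost complex structures on $W$ and homotopy classes of almost contact structures on $M$. Indeed, $W$ is homotopy equivalent to a $2$-complex, so $H_k(W)=0$ for $k\geq3$, whence by Lefschetz duality $H^k(W,M)\cong H_{6-k}(W)=0$ for $k\leq3$, and the exact sequence of the pair gives an isomorphism $H^2(W;\Z)\xrightarrow{\ \sim\ }H^2(M;\Z)$. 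Both classification problems are governed by the fibre $\SO(6)/\U(3)\cong\CP^3$, whose homotopy groups vanish in degrees $3$ through~$6$; hence structures on $W$ and on $M$ are each classified by their primary difference class in $H^2$, and the displayed isomorphism matches up the two classifications. (This is where the homotopy classification of almost contact structures discussed elsewhere in the paper enters.)

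For existence: given a homotopy class of almost contact structures on $M$, pick a representative, use the lemma to extend it over $W$ to an almost complex structure, and apply Eliashberg's $h$-principle \cite{ciel12} — valid since $\dim W=6$ and all handles are subcritical — to homotope it to the almost complex structure underlying a subcritical Stein structure on $W$. Its boundary is a subcritically Stein fillable contact structure on $M$ in the prescribed homotopy class.

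For uniqueness: let $\xi_0,\xi_1$ be subcritically Stein fillable contact structures on $M$ with homotopic underlying almost contact structures, with subcritical Stein fillings $(W_i,J_i)$. By the smooth part of Theorem~\ref{thm:topology}, $W_0$ and $W_1$ are diffeomorphic, so after transporting $J_1$ across such a diffeomorphism we may assume both fillings live on the single manifold $W$ (at the cost of altering $\xi_1$ by a self-diffeomorphism of $M$ — an issue I return to below). By the lemma, the restrictions of $J_0$ and $J_1$ to $M$ being homotopic forces $J_0\simeq J_1$ on $W$; by the uniqueness half of the $h$-principle for subcritical (flexible) Stein structures, due to Cieliebak--Eliashberg \cite{ciel12}, they are then Stein homotopic; and Gray stability applied to the family of boundary contact structures along such a Stein homotopy shows $\xi_0$ and $\xi_1$ are isotopic. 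Running the same argument on two fillings of a fixed $(M,\xi)$ gives uniqueness of the filling up to Stein homotopy.

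The step I expect to be the main obstacle is matching the smooth classification to the precision ``up to isotopy''. A diffeomorphism $W_0\to W_1$ restricts only to some orientation-preserving self-diffeomorphism of $M$, and since $H^2(M;\Z)$ has $2$-torsion when $m$ is even one must know that the self-diffeomorphisms arising this way fix the homotopy class of the almost contact structure (equivalently, that the Mazur--Wall thickening diffeomorphism can be taken to be the identity near $\partial W$). This is handled as in \cite{bcs14}, by carrying the boundary along throughout the thickening argument; the two $h$-principles and the cohomology computation are by contrast routine.
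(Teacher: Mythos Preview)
Your proposal addresses only the final sentence of the theorem and treats the topological classification of $M$ and $W$ as input (``together with the smooth classification already obtained'', ``By the smooth part of Theorem~\ref{thm:topology}''). But that classification \emph{is} the theorem; it is not proved elsewhere in the paper. The paper's proof supplies it by (a) observing that $W$ is simple homotopy equivalent to a finite $2$-complex, (b) invoking Hambleton--Kreck~\cite{hakr93} to identify that complex with $X_m\vee_r S^2$, (c) applying Mazur--Wall thickening theory in the stable range to reduce the classification of $W$ to $H^2(X_m\vee_r S^2;\Z_2)$ via~$w_2$, and (d) for $m$ even, performing a handle-slide to show $W_{1,1}\cong W_{0,1}$. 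None of this machinery appears in your proposal, and your list of candidate fillings $W$ in the first paragraph presupposes exactly what steps (a)--(d) establish.

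For the contact-geometric portion, your argument is essentially the paper's: your $H^2$-restriction lemma is Proposition~\ref{prop:acs-subcritical}, and your two appeals to Cieliebak--Eliashberg are Theorems~\ref{thm:CE1} and~\ref{thm:CE2}. The concern you raise in the final paragraph---that the abstract diffeomorphism $W_0\to W_1$ need not restrict to the identity on $M$, so one only gets $\xi_0$ isotopic to $\psi^*\xi_1$ for some $\psi\in\mathrm{Diff}(M)$---is a genuine subtlety. The paper sidesteps it by working throughout on a single model $W$ with $\partial W=M$ and phrasing uniqueness as uniqueness of the Stein structure on that fixed $W$; it does not explicitly justify why an arbitrary subcritical filling $(W',J')$ with $\partial W'=M$ can be identified with $W$ rel boundary. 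So you are right that there is something to say here, but your resolution (``handled as in \cite{bcs14}, by carrying the boundary along'') is no more detailed than the paper's, and in any case does not compensate for the missing topological argument.
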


The strategy for proving this theorem is as follows.
First, we use homotopy-theoretic methods to arrive at
a topological classification of the potential subcritical fillings.
We then appeal to the fundamental work of Cieliebak--Eliashberg~\cite{ciel12}
that reduces the existence and classification question for Stein structures,
in the subcritical case, yet again to a problem of homotopy theory.
The relevant results from \cite{ciel12} will be recalled below.
That second homotopy-theoretic problem, the homotopy classification of
almost contact and almost complex structures on $5$-dimensional
and $6$-dimensional manifolds, respectively, is a matter of classical
obstruction theory; see Section~\ref{section:acs}.

The same strategy, combined with the results
of~\cite{bcs14}, allows us to complete our discussion
of subcritical Stein fillings of simply connected $5$-manifolds
in~\cite{dgk12}, which was written previous to \cite{ciel12}
being available.

\begin{thm}
\label{thm:pi1=1}
Let $M$ be a closed, simply connected $5$-manifold admitting
a subcritical Stein filling, that is, one of the manifolds
$\#_r S^2\times S^3$ or $S^2\tilde\times S^3\#_{r-1}S^2\times S^3$.
Then each homotopy class of almost contact structures
contains a unique subcritically Stein fillable contact structure,
up to isotopy,
with a unique Stein filling, up to Stein homotopy.
\end{thm}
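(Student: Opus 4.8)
The plan is to combine a topological classification of the possible subcritical fillings with the Cieliebak--Eliashberg h-principle for subcritical Stein structures \cite{ciel12} and a short obstruction-theoretic computation. First I would pin down the filling topologically. By the deformation theorem of \cite{ciel12}, any subcritical Stein filling $W$ of $M$ admits, after a Stein homotopy, a handle decomposition with handles of index at most~$2$; since attaching handles of index $\ge 3$ does not change the fundamental group, $W$ is simply connected, so the $1$-handles can be traded against $2$-handles and all but one $0$-handle cancelled, and --- as every framed link of circles in $\partial D^6 = S^5$ is a trivial unlink --- $W$ is a boundary connected sum of $D^4$-bundles over $S^2$ with spine $K = \bigvee_r S^2$. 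As in \cite{bcs14}, Mazur--Wall thickening theory (applicable since $6$ lies in the stable range for a $2$-complex) then identifies the diffeomorphism type of $W$: thickenings of $K$ are classified by their stable normal bundle in $\widetilde{KO}^0(K)$, but up to the self-equivalences of $K$ --- acting through $\mathrm{GL}_r(\Z_2)$, which is transitive on non-zero vectors --- this amounts to $w_2$ alone, so $W$ is determined by $r = \rank H_2(M;\Z)$ and by whether $M$ is spin. Thus every subcritical filling of a given $M$ (carrying any contact structure) is diffeomorphic to one fixed model $W = W(M)$.

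Next I would invoke \cite{ciel12} again: subcritical isotropic attaching spheres satisfy an h-principle, so subcritical Stein domains are flexible, and on the fixed manifold $W$ the Stein structures up to Stein homotopy correspond bijectively to homotopy classes of almost complex structures on $W$; moreover this correspondence respects the passage to the boundary --- a Stein homotopy restricts to an isotopy of boundary contact structures, and the underlying almost contact structure of $\partial(W,J)$ is $J|_{\partial W}$ up to homotopy. So everything reduces to the restriction map from homotopy classes of almost complex structures on $W$ to homotopy classes of almost contact structures on $M = \partial W$.

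For this I would use that almost complex structures on a $6$-manifold are sections of a bundle with fibre $\SO(6)/\U(3) \cong \CP^3$, and almost contact structures on a $5$-manifold are sections of a bundle with fibre $\SO(5)/\U(2) \cong \CP^3$; since $\pi_k(\CP^3) \cong \pi_k(S^7)$ vanishes for $3 \le k \le 6$, the fibre is $1$-connected with $\pi_2 \cong \Z$ and no further homotopy in the relevant range. As $W \simeq \bigvee_r S^2$, Lefschetz duality gives $H^j(W,M;A) \cong H_{6-j}(W;A)$, which is concentrated in degrees $j = 4, 6$, and $H^2(W;\Z) \to H^2(M;\Z)$ is an isomorphism; feeding the vanishing of $\pi_k(\CP^3)$ for $3 \le k \le 6$ into the obstruction sequences for extending and comparing sections shows that every almost contact structure on $M$ extends to an almost complex structure on $W$, uniquely up to homotopy rel $M$. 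Hence restriction is a bijection between homotopy classes of almost complex structures on $W$ and homotopy classes of almost contact structures on $M$.

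Assembling the pieces gives the theorem: existence follows by extending a representative of a given homotopy class of almost contact structures to $W$ and applying \cite{ciel12}; uniqueness follows because two subcritical fillings of $(M,\xi_0)$ and $(M,\xi_1)$ with homotopic boundary almost contact structures both sit on the model $W$, their almost complex structures restrict to homotopic almost contact structures on $M$ and are therefore homotopic on $W$, so the fillings are Stein homotopic and in particular $\xi_0$ is isotopic to $\xi_1$; the uniqueness of the filling up to Stein homotopy is the special case $\xi_0 = \xi_1$. I expect the main obstacle to be not any single computation --- the obstruction theory is easy because $\CP^3$ is highly connected in the relevant range, and the topology is essentially that of \cite{bcs14} --- but the bookkeeping at the interface of these steps: one must apply the Cieliebak--Eliashberg correspondence in a form that tracks the boundary identification and use that the thickening $W$ is rigidly controlled by its $2$-dimensional spine, so that the whole chain of bijections composes over a single fixed $M$ rather than only up to an uncontrolled self-diffeomorphism of $M$.
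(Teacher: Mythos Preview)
Your proposal is correct and follows the same three-step route as the paper: determine the filling $W$ via Mazur--Wall thickening theory (the paper simply cites \cite{bcs14} here, while you sketch the handle-trading argument), reduce to almost complex structures via the Cieliebak--Eliashberg theorems, and show that the restriction $\A(W)\to\A(M)$ is a bijection. The only difference in packaging is that you establish this last bijection by relative obstruction theory together with Lefschetz duality (using $H^j(W,M)\cong H_{6-j}(W)$ and the vanishing of $\pi_k(\CP^3)$ for $3\le k\le 6$), whereas the paper builds free transitive $H^2$-actions on $\A(M)$ and $\A(W)$ (Propositions~\ref{prop:acs5}, \ref{prop:acs6}, \ref{prop:acs-subcritical}) and then invokes $H^2(W;\Z)\cong H^2(M;\Z)$; both are standard obstruction theory and your version is arguably more direct for this application, while the paper's buys the Chern-class formula $c_1(u\ast\sigma)=c_1(\sigma)+2u$ needed elsewhere.
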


Two particular consequences (or special cases) of this result
are worth noting.
Here, for $(M,\xi)$ a
contact manifold, $c_1(\xi)\in H^2(M;\Z)$ denotes the first
Chern class of~$\xi$; recall that a contact structure carries a
complex bundle structure, unique up to
homotopy~\cite[Proposition~2.4.8]{geig08}.

\begin{cor}
\label{cor:pi1=1}
(a) Any subcritically Stein fillable contact structure on $S^5$ is
isotopic to the standard contact structure.

(b) Let $(M_i,\xi_i)$, $i=1,2$, be two simply connected
subcritically Stein fillable contact $5$-manifolds. If there is an
isomorphism $\phi\co H^2(M_1;\Z)\rightarrow H^2(M_2;\Z)$ such that
$\phi(c_1(\xi_1))=c_1(\xi_2)$, then $(M_1,\xi_1)$ and $(M_2,\xi_2)$ are
contactomorphic.
\end{cor}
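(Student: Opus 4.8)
The plan is to derive Corollary~\ref{cor:pi1=1} directly from Theorem~\ref{thm:pi1=1}, so the real work is (a) producing the homotopy classification of almost contact structures on these simply connected $5$-manifolds in cohomological terms, and (b) checking that the invariant $c_1(\xi)$ distinguishes them. First I would recall that on a $5$-manifold an almost contact structure is a reduction of the structure group of $TM$ to $\U(2)\times 1$, equivalently a complex rank-$2$ subbundle of $TM$ (plus a trivial complement), or — after stabilising — a lift of the stable tangent bundle from $\BSO$ to $\BU$. Over a CW complex of dimension $5$ the relevant obstructions live in $H^i(M;\pi_{i-1}(\mathrm{SO}/\U))$; since $\mathrm{SO}/\U\simeq \Omega^{-1}(\text{something})$ has $\pi_0=\pi_1=0$, $\pi_2=\Z$, $\pi_3=0$, $\pi_4=\Z$ roughly (I would pin this down in Section~\ref{section:acs}), the primary invariant is an integral lift of $w_2$ living in $H^2(M;\Z)$, and on our manifolds $H^4(M;\Z)$ either vanishes or the secondary obstruction is killed, so the homotopy class of an almost contact structure is determined by the class in $H^2(M;\Z)$ it induces. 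On a contact structure this class is exactly $c_1(\xi)$. I would remark that $H^2(\#_rS^2\times S^3;\Z)\cong\Z^r$ (and similarly for the non-spin case), that $c_1(\xi)$ is an even class iff $M$ is spin, and that every admissible class is realised.

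For part~(a): on $S^5$ we have $H^2(S^5;\Z)=0$, so there is a single homotopy class of almost contact structure, hence by Theorem~\ref{thm:pi1=1} a unique subcritically Stein fillable contact structure up to isotopy; the standard contact structure on $S^5$ is subcritically Stein fillable (it bounds the ball $D^6$ with its standard Stein structure, whose handle decomposition has only a $0$-handle), so it must be that structure. For part~(b): the hypothesis gives a cohomology isomorphism $\phi$ matching $c_1(\xi_1)$ with $c_1(\xi_2)$; I would first promote $\phi$ to an (orientation-preserving) diffeomorphism $f\co M_1\to M_2$. This uses the classification of simply connected $5$-manifolds — by Smale--Barden the manifolds in question (connected sums of $S^2\times S^3$ and the twisted bundle, i.e.\ the ones with torsion-free $H_2$ and with $w_2$ determined by the second Wu class) are classified up to diffeomorphism by $H_2$ together with the linking form and $w_2$, all of which are carried along by $\phi$ once we know $M_1,M_2$ are of the same parity type; the parity type is detected by $\phi$ because a class in $H^2$ is characteristic (dual to $w_2$) in a way preserved by ring isomorphisms. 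Pulling back $\xi_2$ by $f$ gives a contact structure $f^*\xi_2$ on $M_1$ with $c_1(f^*\xi_2)=f^*c_1(\xi_2)$, and I would arrange $f$ so that $f^*c_1(\xi_2)=c_1(\xi_1)$ using the freedom in the choice of $\phi$/$f$. Then $\xi_1$ and $f^*\xi_2$ are two subcritically Stein fillable contact structures on $M_1$ with the same $c_1$, hence (by the classification just recalled) homotopic as almost contact structures, hence by Theorem~\ref{thm:pi1=1} isotopic; composing the isotopy with $f$ yields the desired contactomorphism.

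The main obstacle, I expect, is not the contact-topological input — that is exactly Theorem~\ref{thm:pi1=1} — but the bookkeeping in step~(b) showing that an abstract isomorphism $\phi$ of $H^2$ respecting $c_1$ can always be realised by a diffeomorphism respecting $c_1$. One has to know that, among the finitely many diffeomorphism types $\#_rS^2\times S^3$ and $S^2\tilde\times S^3\#_{r-1}S^2\times S^3$ with fixed $r$, the pair $(M_i,c_1(\xi_i))$ is determined up to diffeomorphism by $r$ and the parity of $c_1$, and that the relevant mapping class group acts on $H^2\cong\Z^r$ by a large enough subgroup of $\mathrm{GL}(r,\Z)$ (in fact all of $\mathrm{GL}(r,\Z)$ for these manifolds, fixing $w_2$) to match up any two primitive-or-not classes of the same parity and divisibility. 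I would handle this by invoking the Smale--Barden classification and a standard computation of the diffeomorphism-induced action on $H_2$ for connected sums of sphere bundles; once that is in place, the corollary follows formally. A secondary, purely expository obstacle is making the almost contact obstruction theory in Section~\ref{section:acs} precise enough that ``determined by $c_1$'' is a genuine statement and not folklore; this is classical but needs the homotopy groups of $\SO/\U$ through dimension~$5$ to be computed carefully, including the behaviour of the degree-$4$ obstruction.
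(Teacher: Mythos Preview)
Your approach is correct and matches the paper's: part~(a) is immediate from Theorem~\ref{thm:pi1=1} since $H^2(S^5;\Z)=0$, and for part~(b) the paper likewise invokes Barden's classification (via \cite[Theorem~7.16]{hami08}) to realise $\phi$ by a diffeomorphism and then applies Theorem~\ref{thm:pi1=1} together with Corollary~\ref{cor:no2}. Your one genuine uncertainty --- a possible higher obstruction coming from $\pi_4$ --- dissolves once you work with the unstable fibre $F_5=\SO(5)/\U(2)\cong\CP^3$ as the paper does, since $\pi_i(\CP^3)=0$ for $i=3,4,5$ and hence the primary difference in $H^2(M;\Z)$ is the \emph{only} obstruction to homotopy of almost contact structures.
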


Part (a) confirms an expectation from
\cite[Section~6]{dgk12}. Part (b) had been proved
in \cite[Theorem 4.8]{dgk12} under the
additional assumption that the fillings contain no $1$-handles.
As pointed out by the referee, an isomorphism $\phi$
as required by part (b) of the corollary exists if and only
if the Chern classes $c_1(\xi_1)$ and $c_1(\xi_2)$ have the same
divisibility in the free abelian group $H^2(M_i;\Z)$,
see~\cite[Theorem~8.20]{hami08}. Thus, the divisibility of the
first Chern class is the only contactomorphism invariant
of a subcritically Stein fillable contact structure on
a simply connected $5$-manifold.
\section{Homotopy classification of almost contact structures}
\label{section:acs}
In this section we discuss the homotopy
classification of almost contact structures on $5$-manifolds,
correcting a negligence in~\cite{geig08}. Likewise, we describe
the classification of almost complex structures on $6$-manifolds,
correcting a similar oversight in~\cite{wall66b}. These
classification results are key ingredients in the proof
of Theorem~\ref{thm:topology} and Theorem~\ref{thm:pi1=1}.

A careful discussion of this homotopy classification can be found
in Mark Hamilton's thesis~\cite[VIII.4]{hami08}, and our reasoning
goes along the same lines. We show that by a closer
look at this obstruction-theoretic argument one can in fact exhibit
a free and transitive action of the second cohomology group on the
space of almost contact (resp.\ complex) structures.

Let $M$ be a compact (not necessarily closed), oriented $5$-manifold.
A choice of Riemannian metric
on~$M$, or equivalently a reduction of the structure group
of the tangent bundle to $\SO(5)$, allows us to describe the tangent
bundle $TM$ in terms of a classifying map $f\co M\rightarrow\BSO(5)$.
Define the inclusion $\U(2)\subset\SO(5)$ by the embedding
$\C^2\equiv\R^4\times\{0\}\subset\R^5$. Any subgroup $\mathrm{G}\subset\OO(5)$
acts on the space $V(5)$ of orthonormal $5$-frames in $\R^{\infty}$,
and this defines the universal bundles $V(5)\rightarrow\mathrm{BG}:=V(5)/
\mathrm{G}$. The quotient $\mathrm{BG}$ is the classifying space
for $\mathrm{G}$-bundles, see
\cite[Section~A.2]{whit78}. The inclusion $\U(2)\subset\SO(5)$
defines a fibration $p\co\BU(2)\rightarrow\BSO(5)$ with fibre
$F_5:=\SO(5)/\U(2)$.

An almost contact structure on $M$ is a reduction of the structure
group of $TM$ from $\SO(5)$ to $\U(2)$, which amounts to a lift
$\tilde{f}$ of the classifying map~$f$:
\begin{diagram}
          &                   & \BU(2)\\
          & \ruTo^{\tilde{f}} & \dTo_p      \\
M         & \rTo_{f}          & \BSO(5)
\end{diagram}

The lifting condition $p\circ\tilde{f}=f$ is equivalent to saying that
the map
\[ M\ni x\longmapsto \sigma(x):=\bigl(x,\tilde{f}(x)\bigr)
\in M\times V(5)/\U(2)\]
is a section of the induced bundle $E:=f^*V(5)/U(2)=f^*\BU(2)$ over $M$
with fibre~$F_5$. This is the obstruction-theoretic setting
of \cite[Part~III]{stee51}. Notice that $f^*V(5)$ is the frame
bundle of~$M$.

From now on we shall interpret almost contact structures on $M$ as
sections $\sigma$ of this bundle $E\rightarrow M$. Homotopy of almost
contact structures means homotopy of such sections.

\begin{lem}
\label{lem:section}
The $\U(2)$-bundle $\tilde{f}^*V(5)\rightarrow M$ corresponding to the
almost contact structure defined by $\tilde{f}$ equals the pull-back
of the $\U(2)$-bundle $f^*V(5)\rightarrow E$  under the map~$\sigma\co
M\rightarrow E$.
\end{lem}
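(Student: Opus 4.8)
The plan is to trace through the pullback constructions and then invoke functoriality of pullbacks of principal bundles. First I would record that $E=f^*\BU(2)$ is, by construction, the pullback of $p\co\BU(2)\to\BSO(5)$ along~$f$; in particular it comes with a canonical map $\hat{f}\co E\to\BU(2)$ with $p\circ\hat{f}=f\circ(\text{projection }E\to M)$. Viewing $V(5)\to\BU(2)$ as the \emph{universal} $\U(2)$-bundle (recall $\BU(2)=V(5)/\U(2)$, and $V(5)$ is contractible), I claim the $\U(2)$-bundle $f^*V(5)\to E$ of the statement is exactly $\hat{f}^*V(5)$: the $\U(2)$-action on the frame bundle $f^*V(5)\to M$ is the restriction of the $\SO(5)$-action, its quotient is $E$ by the very definition $E=f^*V(5)/\U(2)$, and the tautological $\SO(5)$-equivariant map $f^*V(5)\to V(5)$ descends to the classifying map $\hat{f}\co E\to\BU(2)$ of $f^*V(5)\to E$.

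Second, by the definition of the section associated with a lift, $\hat{f}\circ\sigma=\tilde{f}$: in the standard model $E\subset M\times\BU(2)$ one has $\sigma(x)=(x,\tilde{f}(x))$ and $\hat{f}$ is projection to the second factor. Therefore
\[ \sigma^*\bigl(f^*V(5)\to E\bigr)=\sigma^*\hat{f}^*V(5)=(\hat{f}\circ\sigma)^*V(5)=\tilde{f}^*V(5)\]
as $\U(2)$-bundles over~$M$, which is the assertion. As a concrete cross-check one can dispense with $\hat{f}$ entirely: over $x\in M$ the fibre of $f^*V(5)\to E$ above $\sigma(x)$ is the $\U(2)$-orbit of frames representing $\sigma(x)$, and by construction this orbit is the fibre of the reduction $\tilde{f}^*V(5)$; so the two bundles are literally the same sub-$\U(2)$-bundle of the frame bundle of~$M$.

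The proof is thus a formal manipulation of pullbacks, and the only real obstacle is bookkeeping: one must keep apart the two roles of $V(5)$ — as the universal $\SO(5)$-bundle over $\BSO(5)$, whose $f$-pullback is the frame bundle of~$M$, and as the universal $\U(2)$-bundle over $\BU(2)$, whose $\tilde{f}$-pullback is the almost contact reduction — and the correspondingly different meanings of ``$f^*V(5)$'' in ``$f^*V(5)\to M$'' versus ``$f^*V(5)\to E$''. Once these identifications are made explicit, nothing further is needed.
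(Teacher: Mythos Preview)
Your proof is correct and is essentially the same argument as the paper's, just packaged differently: the paper writes out the pullback sets explicitly and unwinds the condition $\sigma(x)=\pi_E(w)$ to recover the defining equation $\tilde{f}(x)=\pi_{\U}(v)$ of $\tilde{f}^*V(5)$, whereas you factor the same computation through the canonical map $\hat{f}\co E\to\BU(2)$ and invoke functoriality via $\hat{f}\circ\sigma=\tilde{f}$. Your concrete cross-check at the end is in fact precisely the paper's computation in slightly different words.
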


\begin{proof}
Write the two relevant universal bundles as
\[ \pi_{\SO}\co V(5)\rightarrow\BSO(5)\;\;\;\text{and}\;\;\;
\pi_{\U}\co V(5)\rightarrow\BU(2).\]
Then
\[ f^*V(5)=\bigl\{ (x,v)\in M\times V(5)\co f(x)=\pi_{\SO}(v)\bigr\},\]
and the bundle projection $\pi_E\co f^*V(5)\rightarrow E$
is given by
\[ \pi_E(x,v)=\bigl(x,\pi_{\U}(v)\bigr)\in f^*\BU(2)=E.\]
Under $\sigma$ this pulls back to
\[ \bigl\{(x,w)\in M\times f^*V(5)\co \sigma(x)=\pi_E(w)\bigr\},\]
with the obvious projection map to~$M$.
This space can be rewritten as
\[ \bigl\{(x,v)\in M\times V(5)\co \tilde{f}(x)=\pi_{\U}(v)\bigr\},\]
which is the total space of the bundle $\tilde{f}^*V(5)\rightarrow M$.
\end{proof}

The fibre $F_5$ of the bundle $E\rightarrow M$ is diffeomorphic
to $\CP^3$, see~\cite[Lemma~8.1.2 and Proposition~8.1.3]{geig08}.
From the homotopy exact sequence of the generalised Hopf fibration
$S^1\hookrightarrow S^7\rightarrow\CP^3$ one then sees that
the homotopy groups $\pi_i(F_5)$ are trivial for $i=0,1,3,4,5$,
and $\pi_2(F_5)\cong\Z$.

Since the fibre $F_5$ is simply connected, it is in particular
$2$-simple in the sense of \cite[{\S}16.5]{stee51}, i.e.\
the fundamental group operates trivially on $\pi_2(F_5)$.
Moreover, the structure group $\SO(5)$ of the bundle $E\rightarrow M$
is connected. From \cite[{\S}30.4]{stee51} it then
follows that the bundle of coefficients over $M$ whose fibre
over $x$ is the homotopy group $\pi_2(E_x)\cong\Z$ of the fibre~$E_x$ of~$E$
is actually a trivial bundle. This implies that the obstruction
to extending a section of $E$ over the $2$-skeleton of $M$ to
the $3$-skeleton is a cohomology class in $H^3(M;\Z)$. Given two
sections of $E\rightarrow M$ that are homotopic over the $1$-skeleton,
the obstruction to homotopy over the $2$-skeleton lives
in $H^2(M;\Z)$. Similarly, the obstruction cocycles are simply
integral chains.

The obstruction class for the existence of a section over the $3$-skeleton
can be identified with the third integral Stiefel--Whitney class
$W_3(M)$, see~\cite[p.~370]{geig08}. By the vanishing of the other
relevant homotopy groups of~$F_5$, this class is the only obstruction to the
existence of an almost contact structure. Likewise, the only obstruction
to homotopy of two almost contact structures is the primary difference
class in $H^2(M;\Z)$.

We can now formulate the homotopy classification of almost contact
structures. Regarding an almost contact structure as a $\U(2)$-bundle,
we can sensibly speak of its first Chern class~$c_1$, which is a homotopy
invariant. In the following statement and its proof we allow
ourselves to identify an almost contact structure with the
homotopy class it represents. The $k$-skeleton of $M$ will
be denoted by $M^{(k)}$.

\begin{prop}
\label{prop:acs5}
Let $M$ be a compact, oriented $5$-manifold with $W_3(M)=0$.
There is a free and transitive action of $H^2(M;\Z)$ on the
set $\A(M)$ of almost contact structures on~$M$. Write
$u\ast\sigma\in\A(M)$ for the image of $\sigma\in\A(M)$ under the
action of $u\in H^2(M;\Z)$. Then $c_1(u\ast\sigma)=c_1(\sigma)+2u$.
\end{prop}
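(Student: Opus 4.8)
The plan is to run the standard obstruction-theoretic machinery for sections of the bundle $E\to M$, using the fact that the only nonvanishing relevant homotopy group of the fibre $F_5\simeq\CP^3$ is $\pi_2(F_5)\cong\Z$, and then to identify the resulting $H^2(M;\Z)$-action and compute its effect on $c_1$. First I would fix a CW structure on $M$ and observe that since $W_3(M)=0$ and the higher obstructions vanish (as $\pi_i(F_5)=0$ for $i=0,1,3,4,5$), every almost contact structure, i.e. every section of $E$, extends over all of $M$; moreover any two sections are already homotopic over $M^{(1)}$ since $\pi_1(F_5)=0$. Given two sections $\sigma_0,\sigma_1$ that agree on $M^{(1)}$, the difference cochain measuring the obstruction to a homotopy over $M^{(2)}$ is a cochain in $C^2(M;\pi_2(F_5))=C^2(M;\Z)$, well defined up to coboundary, hence gives a \emph{primary difference class} $d(\sigma_0,\sigma_1)\in H^2(M;\Z)$; by the classical theory (\cite[\S36, \S37]{stee51}) this class is the only obstruction, so $d(\sigma_0,\sigma_1)=0$ if and only if $\sigma_0\simeq\sigma_1$. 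This already shows that the set $\A(M)$ is a torsor in the naive sense once we produce the action.

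Next I would define the action: given $\sigma\in\A(M)$ and $u\in H^2(M;\Z)$, alter $\sigma$ over the $2$-cells of $M$ by a representing cocycle of $u$ (keeping it fixed on $M^{(1)}$ and re-extending over the higher skeleta, which is possible and unique up to homotopy by the vanishing of $\pi_i(F_5)$ for $i\ge 3$), and call the result $u\ast\sigma$. Standard properties of difference cochains give $d(\sigma,u\ast\sigma)=u$, additivity $d(\sigma_0,\sigma_2)=d(\sigma_0,\sigma_1)+d(\sigma_1,\sigma_2)$, and the fact that $d$ depends only on homotopy classes; together with the ``only obstruction'' statement above these formal identities immediately yield that $\ast$ is well defined, free, and transitive. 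I expect no real difficulty here — it is bookkeeping with difference cochains.

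The substantive point is the formula $c_1(u\ast\sigma)=c_1(\sigma)+2u$. Here I would use Lemma~\ref{lem:section}: the $\U(2)$-bundle underlying a section $\sigma$ is $\sigma^*(f^*V(5))$, where $f^*V(5)\to E$ is the universal $\U(2)$-bundle pulled back to $E$. So changing $\sigma$ to $u\ast\sigma$ changes the underlying bundle by precomposing with a homotopy that differs from the identity only over the $2$-cells, by $u\in\pi_2(F_5)$ on each. Thus $c_1(u\ast\sigma)-c_1(\sigma)$ is the image of $u$ under the homomorphism $\pi_2(F_5)\to\pi_2(\BU(2))\cong H^2(\BU(2);\Z)$ induced by the fibre inclusion $F_5=\SO(5)/\U(2)\hookrightarrow\BU(2)$, evaluated appropriately; equivalently it equals $\langle j^*c_1,\,-\rangle$ where $j\co F_5\hookrightarrow\BU(2)$. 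Concretely, $F_5\cong\CP^3$ and one must compute the restriction of $c_1$ of the universal $\U(2)$-bundle to this $\CP^3$. The generator of $\pi_2(F_5)\cong\Z$ is represented by a $2$-sphere $S^2=\CP^1\subset\CP^3$, and the restriction of the universal bundle to this sphere is, up to stabilization, the tangential $\U(2)$-structure along the fibre direction; a direct check (e.g. via the identification $F_5=\SO(5)/\U(2)$ and the fact that $\SO(5)/\U(2)$ carries the ``twistor'' complex structure, so that $c_1$ of the fibrewise complex tangent bundle restricted to a line is $\pm2$) shows $j^*c_1$ evaluates to $2$ on the generator. Hence $c_1(u\ast\sigma)=c_1(\sigma)+2u$. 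The hard part will be pinning down this factor of $2$ cleanly — i.e. computing $j^*c_1\in H^2(\CP^3;\Z)$ — so I would present that computation carefully rather than wave at it, either via the twistor description of $\SO(5)/\U(2)$ or by comparing with the analogous (and classically known) statement for almost complex structures on surfaces, where the Euler class and $c_1$ of a complex line bundle differ by the familiar factor governing the $\CP^1$ in the fibre.
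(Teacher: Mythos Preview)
Your proposal is correct and follows essentially the same obstruction-theoretic route as the paper. The one place the paper is cleaner is the computation of the factor of~$2$: instead of invoking the twistor description of $\SO(5)/\U(2)$ or a comparison with surfaces, the paper reads it off directly from the homotopy exact sequence of the fibration $F_5\hookrightarrow\BU(2)\to\BSO(5)$, which yields $\Z\to\Z\to\Z_2\to 0$ and hence that $\pi_2(F_5)\to\pi_2(\BU(2))$ is multiplication by~$2$.
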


\begin{proof}
Fix a reference element $\sigma_0\in\A(M)$. For any $u\in
H^2(M;\Z)$, by the first
extension theorem \cite[{\S}37.2]{stee51} of obstruction theory,
we can find a section $\sigma_u'$ of $E$ over $M^{(3)}$ such that
the primary difference class $d(\sigma_0,\sigma_u')\in H^2(M;\Z)$ equals~$u$.
Since the higher relevant homotopy groups of $F_5$ vanish, $\sigma_u'$
can be extended to a section $\sigma_u$ over all of~$M$.

Given any other $\tau_u\in\A(M)$ with primary difference
$d(\sigma_0,\tau_u)= u$, the addition formula \cite[{\S}36.6]{stee51}
implies $d(\sigma_u,\tau_u)=0$, and hence that $\sigma_u$
and $\tau_u$ are homotopic. Thus, $\sigma_u$ denotes a well-defined
homotopy class. This allows us to define a free
and transitive action of $H^2(M;\Z)$
on $\A(M)$ by
\[ u\ast\sigma_v:=\sigma_{u+v}.\]

It remains to prove the formula for the first Chern class.
From the homotopy exact sequence of the universal bundles
we have $\pi_2(\BU(2))\cong\pi_1(\U(2))\cong\Z$ and
$\pi_2(\BSO(5))\cong\pi_1(\SO(5))\cong\Z_2$.
The homotopy exact sequence of the bundle
\[ \CP^3=F_5\hookrightarrow\BU(2)\longrightarrow\BSO(5)\]
then gives us
\[ \begin{array}{ccccccc}
\pi_2(F_5) & \longrightarrow & \pi_2(\BU(2)) & \longrightarrow &
  \pi_2(\BSO(5)) & \longrightarrow & \pi_1(F_5)\\
\Z         & \longrightarrow & \Z            & \longrightarrow &
  \Z_2           & \longrightarrow & 0.
\end{array}\]
It follows that the first homomorphism in this sequence
is multiplication by~$2$.

The inclusion map $\iota\co F_5\rightarrow\BU(2)$ is covered by
a bundle map of $\U(2)$-bundles:
\begin{diagram}
\SO(5) & \rTo         & V(5)\\
\dTo   &              & \dTo\\
F_5    & \rTo^{\iota} & \BU(2)
\end{diagram}
This means that the bundle $\SO(5)\rightarrow F_5$ may be regarded
as the induced bundle $\iota^*V(5)\rightarrow F_5$. By the observation
on the homomorphism $\pi_2(F_5)\rightarrow\pi_2(\BU(2))$, the
homomorphism
\[ \iota^*\co\Z\cong H^2(\BU(2);\Z)\longrightarrow H^2(F_5;\Z)\cong\Z\]
is likewise multiplication by~$2$. Since $H^2(\BU(2);\Z)$ is generated by the
first Chern class, it follows that
$c_1(\iota^*V(5))$ is twice a generator of $H^2(F_5;\Z)\cong\Z$.
Choose a generator of $\pi_2(F_5)= H_2(F_5;\Z)\cong\Z$ and the
corresponding dual generator of $H^2(F_5;\Z)\cong\Z$ --- in other words, fix
an identification of these groups with~$\Z$ --- in such a way that
$c_1(\iota^*V(5))=-2$.

By construction, we have the formula $d(\sigma,u\ast\sigma)=u$
for the difference class. We therefore need to show that
\begin{equation*}
\label{eqn:difference}
\tag{$\ast$}
c_1(\tau)-c_1(\sigma)=2d(\sigma,\tau)\;\;\text{for any $\sigma,\tau\in
\A(M)$}.
\end{equation*}
It suffices to prove this formula over the $2$-skeleton $M^{(2)}$.
Indeed, the inclusion $M^{(2)}\rightarrow M$ induces an
injective homomorphism $H^2(M;\Z)\rightarrow H^2(M^{(2)};\Z)$.

The bundle $E|_{M^{(1)}}$ is trivial; moreover, the fibre
$F_5$ is simply connected. Thus, we may assume that the
sections $\sigma$ and $\tau$ are constant (and identical)
over the $1$-skeleton~$M^{(1)}$.

Recall from \cite[{\S}36]{stee51} the definition of the primary
difference class $d(\sigma,\tau)$, represented by a cochain
with values in $\pi_2(F_5)$.
Any oriented $2$-cell $\Delta\subset M^{(2)}\subset M$
is described by a characteristic map
$\varphi_{\Delta}\co D^2\rightarrow M$ sending $\Int(D^2)$ homeomorphically
onto $\Delta$, and $\partial D^2$ into~$M^{(1)}$.
The section $\sigma$ of the bundle $E\rightarrow M$ defines
a section $\sigma_{\Delta}$ of the pull-back bundle $\varphi_{\Delta}^*E
\rightarrow D^2$ via
\[ \sigma_{\Delta}(x):=\bigl(x,\sigma\circ\varphi_{\Delta}(x)\bigr),\]
likewise for~$\tau$:
\begin{diagram}
D^2\times F_5\cong\varphi_{\Delta}^*E    & \rTo^{\overline{\varphi}_{\Delta}}
     & E\\
\uTo^{\sigma_{\Delta},\tau_{\Delta}}\dTo &
     & \dTo\uTo_{\sigma,\tau}\\
D^2                                      & \rTo_{\varphi_{\Delta}}
     & M\\
\end{diagram}
Notice that the pull-back bundle over $D^2$ is trivial, and in the
trivialisation $\varphi_{\Delta}^*E\cong D^2\times F_5$ the sections
$\sigma_{\Delta},\tau_{\Delta}$ may be regarded as maps $D^2\rightarrow F_5$.
However, there is no \emph{a priori} relation between this
trivialisation and that of $E|_{M^{(1)}}$, so $\sigma_{\Delta},\tau_{\Delta}$
coincide over $\partial D^2$, but they
will not, in general, be constant along~$\partial D^2$.

Write $\pi_{\pm}\co S^2_{\pm}\rightarrow D^2$ for the projection of the upper
and lower hemisphere of the $2$-sphere, respectively, onto the equatorial
disc. Then the class $d(\sigma,\tau)$ is represented by the cocycle
whose value on $\Delta$ is the element of $\pi_2(F_5)$ given by the
map
\[ d(\sigma,\tau)(\Delta)=
\begin{cases}
\sigma_{\Delta}\circ\pi_+ & \text{on $S^2_+$},\\
\tau  _{\Delta}\circ\pi_- & \text{on $S^2_-$}.
\end{cases}\]
Here, by slight abuse of notation, we do not distinguish between cocycles
and the cohomology classes they represent.
The sign convention for the difference class is the standard one as in
\cite[{\S}33.4]{stee51}.

Over $E$ we have the $\U(2)$-bundle $f^*V(5)\rightarrow E$, which
we shall now denote by~$\eta$. Our aim is to compute the difference
$c_1(\tau)-c_1(\sigma)$, which by definition equals
$c_1(\tau^*\eta)-c_1(\sigma^*\eta)$. These Chern classes live
in the cohomology of $M$ with coefficients in the coefficient bundle
$\eta(\pi_1)$ in the notation of \cite[{\S}30.2]{stee51}.
Since the structure group $\U(2)$
has abelian fundamental group $\pi_1(\U(2))\cong \Z$, hence is $1$-simple,
and is connected, again by \cite[{\S}30.4]{stee51} this
coefficient bundle is trivial and we are simply dealing with
integral cohomology classes. (This is of course well known.)

With $\overline{\varphi}_{\Delta}$ defined by the diagram above,
the pull-back bundle $\overline{\varphi}_{\Delta}^*\eta=D^2\times\iota^*V(5)$
restricts to a trivial bundle over either $\sigma_{\Delta}(D^2)$
and $\tau_{\Delta}(D^2)$, and we have sections of these bundles over
$\sigma_{\Delta}(\partial D^2)=\tau_{\Delta}(\partial D^2)$, since
$\sigma\circ\varphi_{\Delta}|_{\partial D^2}=
\tau\circ\varphi_{\Delta}|_{\partial D^2}$ is a constant section of
$E|_{M^{(1)}}\cong M^{(1)}\times F_5$. These sections define elements
of $\pi_1(\U(2))\cong\Z$, and the classes $c_1(\sigma),c_1(\tau)$
are represented by the cochains whose value on $\Delta$ is
precisely that respective element.

It follows that $c_1(\sigma)-c_1(\tau)$ is represented by a cochain
whose value on $\Delta$ is given by the first Chern class of the
$\U(2)$-bundle $\iota^*V(5)$
over the $2$-sphere $d(\sigma,\tau)(\Delta)\in\pi_2(F_5)$.
Since $c_1(\iota^*V(5))=-2$, this implies~($\ast$).
\end{proof}

The following corollary, see~\cite[Theorem~8.18]{hami08},
is then immediate.

\begin{cor}
\label{cor:no2}
In the absence of $2$-torsion in $H^2(M;\Z)$,
almost contact structures are determined up to homotopy by the first
Chern class.
\end{cor}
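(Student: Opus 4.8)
The plan is to read off Corollary~\ref{cor:no2} directly from Proposition~\ref{prop:acs5}. Recall first that the first Chern class of an almost contact structure, viewed as a $\U(2)$-bundle, depends only on its homotopy class; so homotopic almost contact structures automatically have equal first Chern class, and only the reverse implication requires an argument.

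For that direction I would take $\sigma,\tau\in\A(M)$ with $c_1(\sigma)=c_1(\tau)$. Since the action of $H^2(M;\Z)$ on $\A(M)$ furnished by Proposition~\ref{prop:acs5} is transitive, there is some $u\in H^2(M;\Z)$ with $\tau=u\ast\sigma$. The Chern class formula of that proposition then yields
\[ c_1(\sigma)=c_1(\tau)=c_1(u\ast\sigma)=c_1(\sigma)+2u,\]
so that $2u=0$. If $H^2(M;\Z)$ contains no $2$-torsion, this forces $u=0$, and hence $\tau=0\ast\sigma=\sigma$ as homotopy classes. (Here one uses only that the neutral element of $H^2(M;\Z)$ acts trivially, which is part of its being a group action; freeness of the action is not actually needed for this corollary.)

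Alternatively, and more in the spirit of the preceding obstruction-theoretic discussion, one can bypass the language of group actions altogether: by formula~($\ast$) in the proof of Proposition~\ref{prop:acs5}, the primary difference class satisfies $c_1(\tau)-c_1(\sigma)=2\,d(\sigma,\tau)$, so the hypotheses give $2\,d(\sigma,\tau)=0$ and hence $d(\sigma,\tau)=0$ in the absence of $2$-torsion; since $d(\sigma,\tau)\in H^2(M;\Z)$ is the only obstruction to homotoping $\sigma$ into $\tau$, they are homotopic. Either way there is no real obstacle at this stage: all the substantive work has already been done in Proposition~\ref{prop:acs5}, and the corollary is a purely formal consequence of the Chern class formula together with the torsion-freeness hypothesis.
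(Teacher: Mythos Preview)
Your proof is correct and is precisely the intended argument: the paper simply records the corollary as ``immediate'' from Proposition~\ref{prop:acs5}, and what you wrote is the obvious spelling-out of that immediacy via the Chern class formula $c_1(u\ast\sigma)=c_1(\sigma)+2u$ and the absence of $2$-torsion. Your alternative route through formula~($\ast$) is equally valid and amounts to the same thing.
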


By completely analogous arguments, one can also prove the
following homotopy classification of almost complex structures on
$6$-manifolds. Again, the third integral Stiefel--Whitney class
is the only obstruction to the existence of an almost complex structure.

\begin{prop}
\label{prop:acs6}
Let $W$ be a compact, oriented $6$-manifold with $W_3(W)=0$.
There is a free and transitive action of $H^2(W;\Z)$ on the
set $\A(W)$ of almost complex structures on~$W$. Write
$u\ast\sigma\in\A(W)$ for the image of $\sigma\in\A(W)$ under the
action of $u\in H^2(W;\Z)$. Then $c_1(u\ast\sigma)=c_1(\sigma)+2u$.
\qed
\end{prop}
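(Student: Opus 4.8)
The plan is to imitate the proof of Proposition~\ref{prop:acs5} essentially verbatim, substituting the relevant homotopy-theoretic input for $6$-manifolds. First I would recall the setting: an almost complex structure on an oriented $6$-manifold $W$ is a lift of the classifying map $f\co W\rightarrow\BSO(6)$ to $\BU(3)$ along the fibration $\BU(3)\rightarrow\BSO(6)$ with fibre $F_6:=\SO(6)/\U(3)$. The key computation is that $F_6$ is diffeomorphic to $\CP^3$ as well (this is the classical identification $\SO(6)/\U(3)\cong\CP^3$, dual to $\SO(5)/\U(2)\cong\CP^3$; see \cite[Chapter~8]{geig08}), so its homotopy groups $\pi_i(F_6)$ vanish for $i=0,1,3,4,5$ and $\pi_2(F_6)\cong\Z$. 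As in the $5$-dimensional case, $F_6$ is simply connected, hence $2$-simple, and $\SO(6)$ is connected, so the coefficient bundles with fibre $\pi_2(F_6)\cong\Z$ over $W$ are trivial; thus the obstruction to extending a section over the $3$-skeleton lies in $H^3(W;\Z)$ and equals $W_3(W)$ (which we assume vanishes), while the primary difference of two sections agreeing on $W^{(1)}$ lies in $H^2(W;\Z)$.

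Given $W_3(W)=0$, the construction of the action is then word-for-word the same as in Proposition~\ref{prop:acs5}: fix a reference $\sigma_0\in\A(W)$; for each $u\in H^2(W;\Z)$ use the first extension theorem \cite[{\S}37.2]{stee51} to produce a section $\sigma_u'$ over $W^{(3)}$ with $d(\sigma_0,\sigma_u')=u$, extend over all of $W$ using the vanishing of $\pi_i(F_6)$ for $i=4,5,6$, and invoke the addition formula \cite[{\S}36.6]{stee51} to see the homotopy class is well defined. This yields a free and transitive $H^2(W;\Z)$-action via $u\ast\sigma_v:=\sigma_{u+v}$.

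For the Chern class formula I would again reproduce the argument of Proposition~\ref{prop:acs5}. The homotopy exact sequence of $\CP^3=F_6\hookrightarrow\BU(3)\rightarrow\BSO(6)$ gives
\[ \pi_2(F_6)\rightarrow\pi_2(\BU(3))\rightarrow\pi_2(\BSO(6))\rightarrow\pi_1(F_6),\]
that is, $\Z\rightarrow\Z\rightarrow\Z_2\rightarrow 0$ (using $\pi_2(\BU(3))\cong\pi_1(\U(3))\cong\Z$ and $\pi_2(\BSO(6))\cong\pi_1(\SO(6))\cong\Z_2$), so the first map is multiplication by~$2$. Hence the inclusion $\iota\co F_6\rightarrow\BU(3)$ induces multiplication by~$2$ on $H^2$, and since $H^2(\BU(3);\Z)$ is generated by $c_1$, the bundle $\iota^*V(6)\rightarrow F_6$ (identified with $\SO(6)\rightarrow F_6$ as in the displayed bundle map) has $c_1$ equal to twice a generator of $H^2(F_6;\Z)\cong\Z$; normalise so that $c_1(\iota^*V(6))=-2$. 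Then, exactly as before, the difference $c_1(\sigma)-c_1(\tau)$ is represented by the cochain whose value on an oriented $2$-cell $\Delta$ is the first Chern class of $\iota^*V(6)$ over the $2$-sphere $d(\sigma,\tau)(\Delta)\in\pi_2(F_6)$, which gives $c_1(\tau)-c_1(\sigma)=2\,d(\sigma,\tau)$, and hence $c_1(u\ast\sigma)=c_1(\sigma)+2u$.

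The only genuine point to check --- and the one I expect to be the main obstacle, though it is really just a citation --- is the identification $\SO(6)/\U(3)\cong\CP^3$ and the resulting homotopy groups of $F_6$; everything else is mechanical transcription of the $5$-dimensional argument with $5\mapsto 6$, $\U(2)\mapsto\U(3)$, $\SO(5)\mapsto\SO(6)$. Since the excerpt explicitly says the proof is by ``completely analogous arguments'', I would keep the written-out version short, stating $F_6\cong\CP^3$ with a reference, noting the homotopy groups, and then saying that the proof proceeds exactly as for Proposition~\ref{prop:acs5}.
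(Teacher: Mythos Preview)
Your proposal is correct and is precisely what the paper intends: the proposition is prefaced by ``by completely analogous arguments'' and closed with a \qed, so the implied proof is exactly the verbatim transcription you describe, with $F_6=\SO(6)/\U(3)\cong\CP^3$ replacing $F_5$. One small indexing slip: to extend the section from $W^{(3)}$ to all of a $6$-complex and to get homotopy uniqueness you actually need $\pi_i(F_6)=0$ for $i=3,4,5$ (extension obstructions in $H^{i+1}$) and $i=3,4,5,6$ (difference classes in $H^i$), not $i=4,5,6$ as you wrote; since all of $\pi_3,\ldots,\pi_6$ of $\CP^3$ vanish this is harmless.
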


\begin{rem}
\label{rem:subcritical}
Let $M$ be a closed, connected $5$-manifold with a subcritical
filling~$W$, i.e.\ a topological filling made up of handles of index
at most two. Dually, $W$ can be obtained from $M$ by attaching
handles of index at least four. The particular consequences
relevant to the discussion below are that the inclusion $M\rightarrow W$
induces isomorphisms both on fundamental groups and on the second
cohomology groups (with any coefficients).
\end{rem}

With this observation, we can formulate a relation between the
sets $\A(M)$ and $\A(W)$. We have a restriction map
$\A(W)\rightarrow\A(M)$ defined by $J\mapsto J|_{TM}$. Here,
by slight abuse of notation, $J\mapsto J|_{TM}$ denotes
the almost contact structure on $M$ given by the coorientable
hyperplane field $TM\cap J(TM)$ with the complex bundle
structure given by~$J$. The isomorphism $H^2(W;\Z)\cong H^2(M;\Z)$
in the following proposition is understood to be the one
induced by the inclusion map $M\rightarrow W$.

\begin{prop}
\label{prop:acs-subcritical}
Let $M$ be a closed, connected $5$-manifold having a subcritical
filling~$W$ with $W_3(W)=0$. Then the restriction map
$\A(W)\rightarrow\A(M)$, $J\mapsto J|_{TM}$ is an
equivariant bijection with respect to the respective actions
of $H^2(W;\Z)\cong H^2(M;\Z)$.
\end{prop}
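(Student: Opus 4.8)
The plan is to prove that $J \mapsto J|_{TM}$ is bijective and equivariant by exploiting the handle structure of the subcritical filling. The key geometric input from Remark~\ref{rem:subcritical} is that, dually, $W$ is built from $M \times [0,1]$ by attaching handles of index $4$ and $5$; equivalently $M$ is obtained from $W$ (up to homotopy) by removing cells of dimension $\geq 4$, so $W$ is homotopy equivalent to a complex with cells only in dimensions $\leq 2$ — more precisely the inclusion $M \hookrightarrow W$ is $3$-connected in the relevant sense. Since almost contact structures on $M$ (resp.\ almost complex structures on $W$) are sections of a bundle with fibre $F_5 \simeq \CP^3$ (resp.\ an analogous bundle with the same fibre, as used in Proposition~\ref{prop:acs6}), and $\pi_i(F_5) = 0$ for $i \neq 2$, the entire classification is governed by the $2$- and $3$-skeleta.

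First I would set up the restriction map carefully: given $J \in \A(W)$, the hyperplane field $TM \cap J(TM)$ with its induced complex structure is an element of $\A(M)$, and one should check this is well-defined on homotopy classes (a homotopy of $J$ through almost complex structures restricts to a homotopy of the induced almost contact structures). Next, I would verify equivariance. This is where I would lean on the naturality of the obstruction-theoretic difference class: the action of $u \in H^2(W;\Z)$ on $\A(W)$ is defined (just as in Proposition~\ref{prop:acs5}) by prescribing the primary difference class over the $3$-skeleton, and difference classes are natural under restriction to a subcomplex. Since $H^2(W;\Z) \to H^2(M;\Z)$ is the isomorphism induced by inclusion, and the difference class $d(J_0|_{TM}, J|_{TM})$ is the image of $d(J_0, J)$ under this isomorphism (both being computed on $2$-cells, which all lie in $M$ up to homotopy), equivariance follows: $d(J_0|_{TM}, (u \ast J)|_{TM}) = u$ viewed in $H^2(M;\Z)$.

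For bijectivity, I would argue as follows. Both $\A(W)$ and $\A(M)$ are nonempty and are free transitive $H^2$-sets (by Propositions~\ref{prop:acs5} and~\ref{prop:acs6} — one must first check $W_3(M) = 0$, which follows since $W_3$ is also governed by $2$- and $3$-cohomology and $H^3(W;\Z) \to H^3(M;\Z)$ is injective, or more simply since the restriction of an almost complex $J$ already furnishes an element of $\A(M)$, so $\A(M) \neq \emptyset$). An equivariant map between two free transitive sets over the same group (via the identification $H^2(W;\Z) \cong H^2(M;\Z)$) is automatically a bijection: it is injective because if $J_1|_{TM}$ and $J_2|_{TM}$ are homotopic then $d(J_1, J_2)$, being the preimage of $d(J_1|_{TM}, J_2|_{TM}) = 0$ under the isomorphism, vanishes, whence $J_1 \simeq J_2$; and surjectivity then follows from a counting/transitivity argument, or directly by producing a preimage of any $\sigma \in \A(M)$ as $d(J_0|_{TM}, \sigma) \ast J_0$ for a fixed reference $J_0$.

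The main obstacle I anticipate is the careful bookkeeping needed to justify that the difference class on $M$ really is the image of the difference class on $W$ under the inclusion-induced map — i.e.\ that one may compute everything on a common $2$-skeleton. Concretely, one wants a CW structure on $W$ in which $M$ (or a deformation retract thereof onto the $2$-skeleton) is a subcomplex, so that the $2$-cells of $W$ are exactly the $2$-cells of $M^{(2)}$; the dual handle description gives this, but one should say a word about why attaching handles of index $\geq 4$ does not affect the $3$-skeleton. The other technical point is checking that almost complex structures on the open $6$-manifold $W$ (before rounding corners, $W$ has boundary) are classified by the same obstruction theory — but Proposition~\ref{prop:acs6} is stated for compact oriented $6$-manifolds with $W_3 = 0$, which applies directly, so this is not a real difficulty.
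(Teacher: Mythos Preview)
Your proposal is correct and takes essentially the same approach as the paper: both exploit that $W$ is obtained from $M$ by attaching handles of index at least four, so the obstruction-theoretic constructions of Propositions~\ref{prop:acs5} and~\ref{prop:acs6} (which live entirely on the $2$- and $3$-skeleta) are compatible under restriction. The paper's proof is far terser than yours---it simply declares the result ``immediate from the construction''---so your careful unpacking of equivariance via naturality of difference classes, together with the clean observation that an equivariant map of torsors over isomorphic groups is automatically bijective, fills in exactly the details the paper leaves to the reader.
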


\begin{proof}
This is immediate from the construction of the action of $H^2(M;\Z)$ on
$\A(M)$ in the proof of Proposition~\ref{prop:acs5},
and the analogous construction for~$W$ to prove Proposition~\ref{prop:acs6},
given that $W$ is obtained from $M$ by attaching handles of index at least
four.
\end{proof}
\section{Topology of subcritically Stein fillable $5$-manifolds}
We first recall the two pertinent results from \cite{ciel12}
in the form in which we need them.

\begin{thm}[{\cite[Theorem~1.5]{ciel12}}]
\label{thm:CE1}
Let $W$ be a compact manifold with boundary, of dimension
$2n\geq 6$, equipped with an almost complex structure~$J$.
If $W$ admits a handle decomposition with handles of index $\leq n$ only,
then $J$ is homotopic to a complex structure $J'$ making $(W,J')$
a Stein domain. The Stein structure can be chosen compatible
with the given handle decomposition.
\end{thm}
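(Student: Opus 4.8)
The plan is to construct the complex structure $J'$ and its $J'$-convex Morse function one handle at a time, following the given handle decomposition, while carrying along a homotopy of almost complex structures. Write $W=H_0\cup H_1\cup\cdots\cup H_N$, where $H_0$ is a $0$-handle and the handles are attached in order of non-decreasing index $k_i\le n$; set $W_i:=H_0\cup\cdots\cup H_i$. I would prove, by induction on $i$, that $W_i$ carries an integrable complex structure $J_i$ together with an exhausting $J_i$-convex Morse function adapted to this partial decomposition, and that $J_i$ is homotopic through almost complex structures on $W_i$ to the restriction $J|_{W_i}$. The base case $i=0$ is the standard Stein ball $(\mathbb{D}^{2n},\Jst)$, and any almost complex structure on a ball is homotopic to~$\Jst$; the theorem is the case $i=N$.

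For the inductive step, suppose $(W_i,J_i)$ is given, so that $M_i:=\partial W_i$ carries the contact structure $\xi_i=TM_i\cap J_i(TM_i)$ of complex tangencies. The handle $H_{i+1}$ of index $k:=k_{i+1}\le n$ is attached along an embedded sphere $a\co S^{k-1}\hookrightarrow M_i$ with a prescribed normal framing. The first task is to isotope $a$, within its smooth isotopy class in $M_i$, to a sphere that is isotropic for $\xi_i$ (automatically Legendrian when $k=n$), carrying the framing dictated by the ambient complex geometry. The key input here is Gromov's $h$-principle for isotropic embeddings in the subcritical range $k-1\le n-2$, and --- in the critical case $k-1=n-1$, where the hypothesis $2n\ge 6$ becomes essential --- the existence $h$-principle for Legendrian embeddings in contact manifolds of dimension $2n-1\ge 5$: any smooth embedded $(n-1)$-sphere equipped with a formal Legendrian structure compatible with the ambient data is isotopic to a genuine Legendrian embedding in the prescribed formal class. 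I expect this to be the main obstacle, since genuine Legendrian isotopy is rigid; what rescues the argument is that one only needs \emph{existence}, one is free to choose the attaching sphere in its smooth isotopy class, and one has the flexibility of stabilising the Legendrian to correct its rotation class and framing so that the resulting homotopy of $J$ closes up.

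Once $a$ has been made isotropic (resp.\ Legendrian) with the correct framing, Eliashberg's construction of Stein handles applies: attaching $H_{i+1}$ along such a sphere produces an integrable $J_{i+1}$ on $W_{i+1}$ that extends $J_i$ near $M_i$, together with a $J_{i+1}$-convex Morse function extending the previous one by exactly one new critical point of index~$k$. Concretely one models $H_{i+1}$ on a standard Weinstein handle in $(\C^n,\Jst)$, glued in via a contactomorphism of a neighbourhood of $a(S^{k-1})$ onto a neighbourhood of the standard isotropic (resp.\ Legendrian) sphere; the $J$-convexity of the resulting function is a local statement verified in the model.

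Finally I would upgrade the homotopy $J_i\simeq J|_{W_i}$ to a homotopy $J_{i+1}\simeq J|_{W_{i+1}}$: since $H_{i+1}\cong D^k\times D^{2n-k}$ is contractible, $J_{i+1}$ and $J|_{W_{i+1}}$ agree up to homotopy on the handle relative to its attaching region, and this is spliced with the homotopy already built on $W_i$. Composing all $N$ homotopies yields $J'=J_N$ with $(W,J')$ Stein, compatible with the given handle decomposition, and $J'\simeq J$. All of the serious $h$-principle and analytic content is concentrated in the isotropic/Legendrian realisation of the attaching spheres and in the local Stein handle model; everything else is bookkeeping with homotopies of hyperplane fields and almost complex structures, exactly of the kind used in Section~\ref{section:acs}.
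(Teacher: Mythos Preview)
The paper does not prove this theorem at all: Theorem~\ref{thm:CE1} is simply quoted from \cite[Theorem~1.5]{ciel12} as a black box, with no argument given. So there is no ``paper's own proof'' to compare against; the authors only \emph{use} this result (together with Theorem~\ref{thm:CE2}) in the proofs of Theorems~\ref{thm:topology} and~\ref{thm:pi1=1}.

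That said, your outline is a faithful sketch of the Cieliebak--Eliashberg argument in~\cite{ciel12}: build the Stein structure handle by handle, realise each attaching sphere as an isotropic (or, in the critical case, Legendrian) sphere via the relevant $h$-principle, attach a model Weinstein handle, and carry the homotopy of almost complex structures across the contractible handle. Two points deserve sharper formulation if you intend this as more than an outline. First, in the critical case $k=n$ the framing of the smooth handle must be matched with the canonical framing of the Legendrian; stabilisation changes the rotation class and the Thurston--Bennequin invariant in a controlled way, but one has to check that the required correction can actually be achieved---this is where the hypothesis $2n\ge 6$ and the specific structure of the formal data enter. Second, ``splicing'' the homotopy over the new handle relative to the attaching region is not quite automatic from contractibility of the handle alone: one needs that the two almost complex structures agree (after the homotopy on $W_i$) along the attaching region, which is precisely what the correct choice of formal Legendrian class in the previous step guarantees. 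Both issues are handled carefully in~\cite{ciel12}; your sketch identifies them but does not resolve them.
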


The second theorem deals with subcritical Stein domains,
where we have a decomposition into Stein handles of index at most $n-1$.
Notice that the preceding theorem says that if we start
with a subcritical handle decomposition and an almost complex
structure, we can find a subcritical Stein structure.

\begin{thm}[{\cite[Theorem~15.14]{ciel12}}]
\label{thm:CE2}
Let $W$ be a compact manifold with boundary, of dimension
$2n\geq 6$, equipped with almost complex structures $J_1,J_2$
making $(W,J_1)$ and $(W,J_2)$ subcritical Stein domains.
If $J_1$ and $J_2$ are homotopic as almost complex structures, they
are homotopic as Stein structures.
\end{thm}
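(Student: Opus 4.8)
The plan is to deduce this from the corresponding statement for Weinstein structures together with the $h$-principle available in the subcritical range. First I would pass from Stein to Weinstein data: a $J$-convex exhausting Morse function $\phi$ on $(W,J)$ produces a Weinstein structure $(W,\omega_\phi,X_\phi,\phi)$, where $\omega_\phi$ is the associated K\"ahler form and $X_\phi$ the dual Liouville field, and conversely the ``Stein $=$ Weinstein'' dictionary of \cite{ciel12} identifies, up to weak homotopy equivalence, the space of Stein structures on $W$ with the space of Weinstein structures on $W$, compatibly with the forgetful maps to the space of almost complex structures. Hence it is enough to prove: two \emph{subcritical} Weinstein structures on $W$ with homotopic underlying almost complex structures are homotopic through Weinstein structures.

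For the Weinstein statement the essential point is subcriticality. A subcritical Weinstein domain of dimension $2n$ is assembled from handles of index $\leq n-1$, so its isotropic skeleton has dimension $\leq n-1$, strictly below the middle dimension, and every handle is glued along an isotropic sphere $S^{k-1}$ in the contact boundary with $k-1\leq n-2<n-1$. Isotropic embeddings of such subcritical dimension obey a full $h$-principle, so the attaching data of a subcritical Weinstein handle can be moved freely through Weinstein handle attachments, subject only to the formal datum recorded by $J$. This flexibility — absent exactly when $k-1=n-1$, i.e.\ for Legendrian attaching spheres — is what converts ``formally homotopic'' into ``Weinstein homotopic''.

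Concretely I would argue in three steps. (1) Using a Weinstein-adapted Cerf theory (creation and cancellation of index pairs, handle slides, rearrangement), performed \emph{within} the class of Weinstein structures, normalise the two Weinstein Morse functions $\phi_1,\phi_2$ to a common profile: the same ordered list of critical points with matching indices. All handles involved remain subcritical, so no index-$n$ critical point is ever needed. (2) Match the handles one index at a time, from the bottom up: at each stage the two attaching isotropic spheres sit in an already-identified contact boundary and are formally homotopic because $J_1\simeq J_2$; being subcritical, they are isotopic through isotropic embeddings, and this isotopy extends to a Weinstein homotopy rel the lower handles. (3) Concatenate these homotopies to join $(\omega_1,X_1,\phi_1)$ to $(\omega_2,X_2,\phi_2)$, and transport the conclusion back to Stein structures via the dictionary used in the first step.

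The main obstacle is Step (1) and its interface with Step (2): one must realise the Morse-theoretic surgeries on $\phi_1$ and $\phi_2$ by genuine Weinstein homotopies and keep control of their effect on the formal almost complex data, so that after normalising the decompositions the attaching spheres are still formally homotopic in a usable way — in particular the parametric ``Weinstein Cerf'' statements ensuring cancellation and slides never force a middle-index critical point. This is precisely the technical core of \cite{ciel12} (the flexibility and $h$-principle chapters culminating in Theorem~15.14), and the subcriticality hypothesis is exactly what eliminates the sole source of rigidity.
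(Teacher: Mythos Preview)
The paper does not prove this theorem at all: it is quoted verbatim from \cite[Theorem~15.14]{ciel12} and used as a black box, with only the added remark that Cieliebak--Eliashberg actually prove it for the larger class of \emph{flexible} Stein domains, which includes the subcritical ones by \cite[Remark~11.30]{ciel12}. So there is no proof in the paper to compare your proposal against.

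That said, your sketch is a faithful outline of the strategy in \cite{ciel12}: the Stein--Weinstein correspondence, the $h$-principle for subcritical isotropic embeddings, and a Weinstein-adapted Cerf theory to normalise and then match handle decompositions. You correctly identify the one genuine difficulty --- realising birth/death and handle slides by Weinstein homotopies without introducing index-$n$ critical points and while keeping track of the formal data --- and you are right that this is exactly the technical content of the flexibility chapters culminating in Theorem~15.14. One small refinement: in the Cieliebak--Eliashberg framework the statement is proved for flexible Weinstein structures, and subcriticality enters only as the observation that subcritical implies flexible; framing it that way makes Step~(1) less delicate, since flexibility is preserved under the Cerf moves by design.
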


In fact, Cieliebak--Eliashberg prove this theorem
for so-called \emph{flexible} Stein domains~\cite[Definition~11.29]{ciel12},
which by \cite[Remark~11.30]{ciel12} includes all subcritical ones.

\begin{proof}[Proof of Theorem~\ref{thm:topology}]
Let $(M,\xi)$ be a closed, connected $5$-dimensional contact manifold
with finite cyclic fundamental group, admitting a subcritical
Stein filling $(W,J)$. This Stein filling is made up of handles
of index at most two, so $W$ is simple homotopy equivalent
to a finite $2$-complex, see~\cite[p.~7]{mazu63}.
Dually, as observed before in Remark~\ref{rem:subcritical},
$W$ can be obtained from $M$ by attaching
handles of index at least four.

For $m\geq 2$ an integer, we write $X_m$ for the $m$-fold dunce cap.
This complex is obtained by attaching a $2$-disc $D^2$ to the circle $S^1$
with an attaching map $\partial D^2\rightarrow S^1$ of degree~$m$.
Equivalently, $X_m$ can be obtained as a quotient space of $D^2$ by identifying
each point $x\in S^1=\partial D^2$ with its rotate through
an angle $2\pi/m$.

According to \cite[Theorem~2.1]{hakr93}, given two finite
$2$-complexes $K,K'$ of the same Euler characteristic $\chi(K)=\chi(K')$
and any isomorphism $\pi_1(K)\rightarrow\pi_1(K')$,
where the fundamental group is a finite subgroup of $\SO(3)$,
there is a simple homotopy equivalence $K\rightarrow K'$ inducing the
given isomorphism on fundamental groups.
This implies that if $\pi_1(M)$ is the cyclic group
of order~$m$, the Stein filling $W$
is simple homotopy equivalent to the $2$-complex $X_m\vee_r S^2$,
where $r:=\chi(W)-1$. Thus, $W$ is a thickening
of this $2$-complex in the sense of \cite{mazu63} or~\cite{wall66a}.

Because of $6\geq 2\cdot 2+1$, $6$-dimensional thickenings of
a $2$-complex are in the stable range, and by \cite[Lemma~11.29]{mazu63}
or \cite[Proposition~5.1]{wall66a}, oriented thickenings of
$X_m\vee_r S^2$ are classified up to diffeomorphism by
$[X_m\vee_r S^2,\BSO]$. This set of homotopy classes
is isomorphic to
\[ [X_m\vee_r S^2,K(\Z_2,2)]\cong H^2(X_m\vee_r S^2;\Z_2),\]
since the homotopy groups $\pi_k(\BSO)$ coincide with those
of the Eilenberg--MacLane space $K(\Z_2,2)$ for $k\leq 2$.
The isomorphism
\[ [X_m\vee_r S^2,\BSO]\cong H^2(X_m\vee_r S^2;\Z_2)\]
is given by the second Stiefel--Whitney class, for this obstruction class
detects the non-trivial oriented $\R^{\infty}$-bundle over~$S^2$;
see \cite[Lemma~8.2.5]{geig08}, for instance.

Write $\natural$ for the boundary connected sum of manifolds with boundary,
and $S^2\tilde\times D^4$ for the non-trivial $D^4$-bundle over~$S^2$.
In the case that $m=2n+1$ is odd, we have $H^2(X_{2n+1};\Z_2)=0$,
so a thickening of $X_{2n+1}\vee_r S^2$ is determined by the
tangent bundle over each of the $r$ $2$-spheres being trivial
or not. There is a well-known diffeomorphism
\[ S^2\tilde\times D^4\natural S^2\tilde\times D^4\cong
S^2\tilde\times D^4\natural S^2\times D^4,\]
see \cite[Proposition~4.7]{dgk12}; this diffeomorphism can also
be derived from the argument we shall use presently in the
case that $m$ is even. It follows that $W$ diffeomorphic to
\[ (L_{2n+1}\times D^3)\natural_r(S^2\times D^4)
\;\;\text{or}\;\;
(L_{2n+1}\times D^3)\natural (S^2\tilde\times D^4)\natural_{r-1}
(S^2\times D^4),\]
depending on whether $W$ is spin or not. Since the inclusion
$M\rightarrow W$ induces an isomorphism on $H^2(\,.\,;\Z_2)$,
this proves part (i) of the proposition.

If $m=2n$ is even, we have $H^2(X_{2n};\Z_2)=\Z_2$, so
there is now also a choice of two thickenings over $X_{2n}$.
The same argument as before shows that $W$ is diffeomorphic to
\[ (L_{2n}\times D^3)\natural_r(S^2\times D^4)\]
if $W$ is spin, or, in the non-spin case, to one of the three manifolds
\begin{eqnarray*}
W_{1,0} & := & (L_{2n}\tilde\times D^3)\#_r(S^2\times D^4),\\
W_{0,1} & := & (L_{2n}\times D^3)\#(S^2\tilde\times D^4)
                \#_{r-1}(S^2\times D^4),\\
W_{1,1} & := & (L_{2n}\tilde\times D^3)\#(S^2\tilde\times D^4)
                \#_{r-1}(S^2\times D^4).
\end{eqnarray*}
The manifolds $W_{1,0}$ and $W_{0,1}$ are not diffeomorphic, since
there is no isomorphism $H^2(W_{1,0};\Z)\rightarrow H^2(W_{0,1};\Z)$
of $\Z_{2n}\oplus\Z^r$
whose mod~$2$ reduction sends $w_2(W_{1,0})$ to $w_2(W_{0,1})$;
the same argument applies to the boundaries of these manifolds.

We claim, however, that $W_{1,1}$ is diffeomorphic to $W_{0,1}$;
it suffices to prove this for $r=1$. Both manifolds are obtained
from a $6$-ball by first attaching a $1$-handle to produce
$S^1\times D^5$, and then a couple of $2$-handles $h_1,h_2\cong D^2\times D^4$.
In order to obtain $W_{0,1}$ we attach $h_1$ by an
attaching map $\varphi_1\co\partial D^2\times D^4\rightarrow
\partial(S^1\times D^5)$ that sends $\partial D^2
\times\{0\}$ to a $(2n,-1)$-torus knot on
\[ S^1\times\partial D^2\times\{0\}\subset S^1\times\partial D^2\times D^3
\subset\partial(S^1\times D^5),\]
extended to an embedding of $\partial D^2\times D^4$ with trivial framing.
The handle $h_2$ is attached by a map $\varphi_2$
sending $\partial D^2\times\{0\}$ to a homotopically trivial
circle in
\[ S^1\times\partial D^2\times\{0\}\setminus\varphi_1\bigl(\partial D^2
\times D^4\bigr),\]
extended to an embedding of $\partial D^2\times D^4$ with twisted
framing, corresponding to the non-trivial element of $\pi_1(\SO(4))=\Z_2$.

By sliding $h_1$ over $h_2$ we get a diffeomorphic manifold where
the framing of the first handle is now also twisted, in other words,
the manifold~$W_{1,1}$. This proves part (ii) of the proposition.

Finally, we come to the statement about the existence of
subcritically fillable contact structures. Let $M$ be one of the
$5$-manifolds in (i) or (ii), and $W$ the corresponding $6$-manifold discussed
in the course of proving this classification, with boundary~$M$.
This manifold $W$ admits an almost complex structure, since
$H^3(W;\Z)=0$.
By Theorem~\ref{thm:CE1}, any almost complex structures on
$W$ is homotopic to a subcritical Stein structure.
From Proposition~\ref{prop:acs-subcritical} it follows that
every homotopy class of almost contact structures on $M$
contains a subcritically Stein fillable contact structure.
According to Theorem~\ref{thm:CE2}, the subcritically Stein
fillable contact structure within a given homotopy class is unique
up to isotopy, and its Stein filling is unique up to Stein homotopy.
\end{proof}

\begin{rem}
(1) Let $M$ be one of the $5$-manifolds in Theorem~\ref{thm:topology}
and $\xi$ a subcritically Stein fillable contact structure as
just described. Then, by \cite[Theorem~5.3]{bgz16}, any
symplectically aspherical filling is homotopy equivalent to
the corresponding~$W$, and even diffeomorphic to $W$ if the
Whitehead group of $\Z_m$ vanishes, which by
\cite[Corollary~6.5]{miln66} happens exactly
for $m\in\{2,3,4,6\}$.

(2) From \cite[Theorem~B]{hakr93} one can derive the
following stabilisation result. Let $(M,\xi)$ be a closed, connected
contact $5$-manifold with finite
fundamental group, admitting a subcritical
Stein filling $(W_0,J_0)$.
Then any subcritical Stein filling
of any contact structure on $M\# S^2\times S^3$ is simple
homotopy equivalent to $W_0\natural S^2\times D^4$. Homotopically,
the additional summand amounts to a one-point union with $S^2$.
\end{rem}
\section{Uniqueness of subcritically Stein fillable $5$-manifolds}
\begin{proof}[Proof of Theorem~\ref{thm:pi1=1}]
By the proof of \cite[Proposition~7.4]{bcs14}, the filling $W$
is diffeomorphic to $\natural_r(S^2\times D^4)$ or
$(S^2\tilde{\times}D^4)\natural_r(S^2\times D^4)$, where
$r$ is the same non-negative integer as in the description of~$M$; this also
follows from \cite[Theorem~1.5]{bgz16}.

The theorem then follows by the same argument as the
one we used at the end of the proof of Theorem~\ref{thm:topology}.
\end{proof}

\begin{proof}[Proof of Corollary~\ref{cor:pi1=1}]
(a) This is an immediate consequence of Theorem~\ref{thm:pi1=1}.
Alternatively, here is a more direct proof.
Suppose that $\xi$ is a contact structure on $S^5$ that admits
a subcritical Stein filling $(W,J)$. This means that there
is a contactomorphism $f\co (S^5,\xi)\rightarrow(\partial W,\xi_J)$,
where $\xi_J$ denotes the contact structure induced by~$J$.

By \cite[Theorem~1.2]{bgz16} or an earlier result of
Oancea--Viterbo~\cite{oavi12}, see the discussion in
\cite[Section~3.3]{bgz16}, the manifold $W$ is a simply
connected homology ball, and hence diffeomorphic to the
standard ball $D^6$ by Proposition~A on page 108 of~\cite{miln65}.
Choose an orientation-preserving diffeomorphism $G\co D^6\rightarrow W$.

Recall that any diffeomorphism of $S^5$ can be extended to
a diffeomorphism of $D^6$; this corresponds with the fact that
there are no exotic $6$-spheres, see \cite{kemi63} and
\cite[Corollary VIII.(5.6)]{kosi93}.
Let $\Phi\co D^6\rightarrow D^6$ be an orientation-preserving diffeomorphism
extending the diffeomorphism $\bigl(G|_{S^5}\bigr)^{-1}\circ f\co
S^5\rightarrow S^5$. Notice that the diffeomorphism
$F:=G\circ\Phi\co D^6\rightarrow W$ restricts to $f$ on~$S^5$.

It follows that the subcritical Stein structure $F^*J$ on $D^6$ induces
the contact structure $\xi$ on~$S^5$. Write $\Jst$ for the
standard complex structure on $D^6$ inducing the standard contact structure
$\xist$ on~$S^5$. The two complex structures $F^*J$ and $\Jst$
on $D^6$ are homotopic as almost complex structures. Thus,
by Theorem~\ref{thm:CE2}, the respective induced contact
structures $\xi$ and $\xist$ on $S^5$ are isotopic.

(b) One way to prove this is by appealing to the
results of Barden~\cite{bard65} on the classification and
the diffeomorphisms of simply connected $5$-manifolds,
as nicely expounded in \cite[Chapter~VII]{hami08}, see
in particular \cite[Theorem~7.16]{hami08}.
Under the assumptions of the corollary (and given the fact
from \cite{bcs14} that simply connected $5$-manifolds
admitting a subcritical Stein filling necessarily have
torsion-free homology), there is a diffeomorphism
$M_2\rightarrow M_1$ that induces the given isomorphism on~$H^2$.
Then argue as in the proof of Theorem~\ref{thm:pi1=1}.
\end{proof}

Here is an alternative argument for part (b) of the
corollary that avoids having to cite
the result of Barden on the diffeomorphisms of
simply connected $5$-manifolds. This argument is,
in some sense, more constructive, since it reduces the problem
to the diagrammatic language of~\cite{dgk12}.

We want to show that any closed, simply connected
contact $5$-manifold $(M,\xi)$ that admits a subcritical Stein
filling $(W,J)$ also admits a subcritical Stein filling without $1$-handles.
Then the theorem follows from the corresponding result
\cite[Theorem~4.8]{dgk12}, which made precisely this additional
assumption on the absence of $1$-handles.

Again we use the fact (as in the proof of Theorem~\ref{thm:pi1=1})
that for a given $M$ the topology of the filling $W$ is
known. As shown in the
proof of \cite[Proposition~4.5]{dgk12}, for any class $c\in H^2(W;\Z)$
that reduces modulo~$2$ to the Stiefel--Whitney class $w_2(W)$,
there is a subcritical Stein structure on $W$ 
\emph{without $1$-handles} with first
Chern class~$c$. (Moreover, the cited proposition shows directly that
the contact structure induced on the boundary is determined by~$c$.)

In particular, we find such a subcritical Stein structure $J'$
with $c_1(J')=c_1(J)$. Since $W$ is simply connected,
the analogue of Corollary~\ref{cor:no2} shows that
$J$ and $J'$ are homotopic as almost complex structures.
By Theorem~\ref{thm:CE2}, this implies
that $J$ and $J'$ are actually Stein homotopic.
Thus, as claimed, the stipulation that there be no $1$-handles poses no 
restriction.

\begin{ack}
F.~D.\ would like to thank Yakov Eliashberg and Otto van Koert
for helpful conversations. We thank the referee for
suggesting some improvements to the content and the
exposition of this paper. F.~D.\ and G.~Z.\ are supported by grant
no.\ 11371033 of the National Natural Science Foundation of China.
H.~G.\ is supported by the SFB/TRR 191 `Symplectic Structures
in Geometry, Algebra and Dynamics', funded by the Deutsche
Forschungsgemeinschaft.
\end{ack}

\end{document}